
\documentclass[11pt, reqno]{amsart}
\usepackage{amsfonts,latexsym,amsthm,amssymb,graphicx}
\usepackage[all]{xy}
\DeclareFontFamily{OT1}{rsfs}{}
\DeclareFontShape{OT1}{rsfs}{n}{it}{<-> rsfs10}{}
\DeclareMathAlphabet{\mathscr}{OT1}{rsfs}{n}{it}

\setlength{\textwidth}{6 in}
\setlength{\textheight}{8.75 in}
\setlength{\topmargin}{-0.25in}
\setlength{\oddsidemargin}{0.25in}
\setlength{\evensidemargin}{0.25in}


\newtheorem{theorem}{Theorem}[section]
\newtheorem{lemma}[theorem]{Lemma}
\newtheorem{corol}[theorem]{Corollary}

{\theoremstyle{definition} }
{\theoremstyle{remark} \newtheorem{remark}[theorem]{Remark}


\newcommand{\Cbb}{{\mathbb{C}}}



\title{Chern Classes of Logarithmic Vector Fields}
\author{Xia Liao}
\address{
Mathematics Department, 
Florida State University,
Tallahassee FL 32306, U.S.A.
}
\email{xliao@math.fsu.edu}

\begin{document}
\maketitle

\begin{abstract}
Let $X$ be a nonsingular complex variety and $D$ a reduced effective divisor in $X$. In this paper we study the conditions under which the formula $c_{SM}(1_U)=c(\textup{Der}_X(-\log D))\cap [X]$ is true. We prove that this formula is equivalent to a Riemann-Roch type of formula. As a corollary, we show that over a surface, the formula is true if and only if the Milnor number equals the Tjurina number at each singularity of $D$. We also show the Rimann-Roch type of formula is true if the Jacobian scheme of $D$ is nonsingular or a complete intersection.
\end{abstract}

\section{introduction}
Let $X$ be a nonsingular variety over $\Cbb$, and $U$ be the complement of a free divisor $D$ in $X$. In this paper, we study the conditions under which the formula
\begin{equation}\label{formula}
c_{SM}(1_U)=c(\textup{Der}_X(-\log D))\cap [X]
\end{equation}
is true. The left hand side of the formula is the Chern-Schwartz-MacPherson class of the open subvariety $U$, and the right hand side is the total Chern class of the sheaf of logarithmic vector fields along $D$. Throughout this paper we are working over the Chow homology theory $A_*$. The Chern classes of vector bundles and coherent sheaves are treated as operators on the Chow ring as described in \cite{MR732620}.

We show that this formula is equivalent to an analogue of a Riemann-Roch type of formula. As a corollary, we show that on a surface, formula~\eqref{formula} is true if and only if the Tjurina number and the Milnor number are the same for all singularities of $D$.

The question is motivated by two previous results of Aluffi \cite{aluffi} \cite{MR1697199}, which state that if $D$ is a normal crossing divisor, or a free hyperplane arrangement of a projective space, then formula ~\eqref{formula} is true. Because normal crossing divisors are free divisors (for this fact and a definition of free divisors, see \cite{MR586450}), It is a natural question to ask if formula~\eqref{formula} holds for any free divisor $D$. The result of this paper implies the freeness of the divisor is in general insufficient to guarantee the validity of  formula~\eqref{formula}.

Replacing $X$ by a complex manifold, we get a complex analytic version of formula~\eqref{formula}. In this case, it is well known that the local quasi homogeneity of the divisor at an isolated singularity is characterized by the equality of the Tjurina number and the Milnor number\cite{MR0294699}. So the corollary about formula~\eqref{formula} over surfaces can be restated in a slight different manner: formula~\eqref{formula} is true for surfaces if and only if the divisor $D$ is locally quasi homogeneous.

\section{technical preparations}

\subsection{Chern-Schwartz-MacPherson class of a hypersurface}
Continue with the notations introduced in the previous section. Recall there is a unique natural transformation from the functor of constructible functions to the Chow functor, associating the characteristic function of a nonsingular variety to the total Chern class of its tangent bundle. Then $c_{SM}(1_U)$ is the image of $1_X-1_D$ in the Chow group of X. For the purpose of the calculation in this paper, we don't need a detailed description of the MacPherson transformation. What we need is a formula of the CSM classes of singular hypersurfaces proved in ~\cite{MR1697199}.

\begin{lemma}[\cite{MR1697199}]
Let $D$ be an effective divisor in a nonsingular variety $X$, and let $J_D$ be its Jacobian scheme. Then
\begin{equation}\label{hyper}
c_{SM}(1_D)=c(\textup{Der}_X) \cap (s(D,X)+c(\mathscr{O}(D))^{-1} \cap (s(J_D, X)^{\vee} \otimes_X D))
\end{equation}
\end{lemma}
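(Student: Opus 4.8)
Write $\mathcal L=\mathscr O(D)$ and let $F$ be the section of $\mathcal L$ whose zero scheme is $D$. The guiding principle is that $c_{SM}(1_D)$ should split as a ``smooth part'' plus a correction supported on the singular locus. The smooth part is the term $c(\textup{Der}_X)\cap s(D,X)$: when $D$ is nonsingular one has $s(D,X)=[D]/(1+D)$, and the projection formula together with adjunction give $c(\textup{Der}_X)\cap s(D,X)=c(T_D)\cap[D]=c_{SM}(1_D)$, while $J_D=\varnothing$ makes the second term vanish, so the formula reduces to a known identity. The entire content of the lemma is therefore the identification of the singular correction $c_{SM}(1_D)-c(\textup{Der}_X)\cap s(D,X)$ with the term built from $s(J_D,X)$. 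The geometric reason the cotangent bundle and the twist by $\mathcal L$ must enter is that $J_D$ is cut out by the derivative of $F$, which is naturally a section of $\Omega^1_X\otimes\mathcal L$; this is the seed of both the dual $(\ )^{\vee}$ and of the factors $c(\mathscr O(D))^{-1}$ and $\otimes_X D$ in the final expression.

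First I would reduce the computation of $c_{SM}(1_D)$ to a calculation on a blow-up. Let $\pi\colon \tilde X\to X$ be the blow-up of $X$ along the Jacobian scheme $J_D$, with exceptional divisor $E$, so that the ideal of $J_D$ becomes principal and the rational section $[dF]$ extends to a morphism on $\tilde X$. On $\tilde X$ the total transform of $D$ together with $E$ can be arranged, after further blow-ups with smooth centers if necessary, to be a normal crossing configuration, for which the CSM class is computed by the classical normal crossing formula. The key input is then the functoriality of $c_{SM}$ under the proper map $\pi$: pushing the explicit upstairs class forward by $\pi_*$ and applying the projection formula converts the divisorial contribution of $E$ into the Segre class $s(J_D,X)=\pi_*\!\big(\tfrac{[E]}{1+E}\big)$, by Fulton's description of Segre classes via blow-ups. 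This is the step where the answer is shown to depend on the singularities of $D$ only through $s(J_D,X)$.

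Finally I would repackage the pushforward into the stated closed form. The Chern class appearing upstairs is that of the bundle controlling the vanishing of $dF$, i.e. a twist of $\Omega^1_X$ by $\mathcal L$; rewriting $c(\Omega^1_X\otimes\mathcal L)$-data in terms of $c(\textup{Der}_X)=c(T_X)$ introduces the dual operation $(\ )^{\vee}$ on $s(J_D,X)$, while carrying the $\mathcal L$-twist through the Segre class produces exactly the tensor operation $\otimes_X D$ and the scalar factor $c(\mathscr O(D))^{-1}$. All of this is the formal calculus of the operations $(\ )^{\vee}$ and $\otimes_X(\ )$, relying on their compatibility with Chern and Segre classes (in particular the identity relating $c(E^{\vee}\otimes\mathcal L)\cap\alpha$ to $c(E)\cap(\alpha^{\vee}\otimes\mathcal L)$ up to a dual and the appropriate power of $c(\mathcal L)$).

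The main obstacle is the second paragraph, not the third: the manipulation with $(\ )^{\vee}$ and $\otimes_X(\ )$ is routine once set up, but proving that the singular correction is genuinely captured by $s(J_D,X)$ — independently of the choice of resolution and of the finer structure of the singularities — is the substantive point. Concretely, one must control how $c_{SM}$ of the total transform behaves under the blow-up of the possibly non-reduced, higher-codimension Jacobian scheme $J_D$, verify that only the Segre class survives the pushforward, and keep exact track of the bookkeeping of $E$ and of the pullback of $\mathcal L$ along $\pi$. That is where the real work lies.
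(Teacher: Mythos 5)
This lemma is not proved in the paper at all: it is quoted as an input from Aluffi's work \cite{MR1697199}, so the only meaningful comparison is with the proof in that reference. Your sketch identifies the right ingredients (reduction to the normal crossing case via resolution, functoriality of $c_{SM}$ under proper maps, recovery of $s(J_D,X)$ from exceptional divisors, and the formal calculus of $(\;)^{\vee}$ and $\otimes_X D$), but it is an outline whose decisive step is missing --- as you concede yourself in the final paragraph. Two concrete problems with the architecture of your second paragraph. First, you propose to blow up $X$ along $J_D$ and then reach a normal crossing configuration ``after further blow-ups with smooth centers if necessary.'' But $J_D$ is in general non-reduced with arbitrary singularities, so $\tilde X=\mathrm{Bl}_{J_D}X$ is typically a \emph{singular} variety; the normal crossing formula for $c_{SM}$ needs a nonsingular ambient space, while the identity $s(J_D,X)=\pi_*([E]/(1+E))$ lives on this first, singular blow-up. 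The CSM computation would have to take place on a further resolution, and relating the exceptional divisors of that resolution back to $s(J_D,X)$ --- independently of the choice of resolution --- is precisely the content of the theorem, not bookkeeping. Second, functoriality gives $\pi_*c_{SM}(\varphi)=c_{SM}(\pi_*\varphi)$ for a constructible function $\varphi$ upstairs, but the pushforward of the characteristic function of the total transform of $D$ is not $1_D$: it involves Euler characteristics of fibers over the strata of $J_D$, and nothing in your sketch controls these.

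Aluffi's actual argument is organized to sidestep both issues: it proceeds by induction along a resolution built from blow-ups with \emph{nonsingular} centers contained in the singular locus of $D$, the key lemma being that both sides of the stated formula transform in the same way under one such blow-up, with divisors with normal crossings as the base case. Your first and third paragraphs (the nonsingular case, and the $(\;)^{\vee}$, $\otimes_X D$ manipulations) are correct but are the routine part; as it stands the proposal is a plausible roadmap, not a proof of the lemma.
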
 

\begin{remark}\label{ten}
The following is a brief clarification of several notations in the formula and some properties we will use in the next section. Proofs and more detailed explanations can be found in \cite{MR1316973}
\begin{enumerate}
\item The Jacobian scheme $J_D$, also called the singular subscheme of the divisor $D$, is the subscheme locally defined by the equation of $D$ and all its partial derivatives.
\item For a subscheme $Y$ of $X$, $s(Y,X)$ is the {\em{Segre class}} of $Y$ in $X$ in the sense of   
\cite{MR732620}, Chapter 4. 
\item The Segre class $s(D,X)$ is easy to compute for an effective divisor $D$ in $X$. It equals the class $\frac{[D]}{c(\mathscr{O}(D))}$.   
\item If $A= \oplus_{i}a^i$ is a rational equivalence class on a scheme $X$, indexed by codimension, we let
\begin{equation*}
A^{\vee}=\sum_{i\geq 0} (-1)^i a^i
\end{equation*} 
the {\em{dual}} of $A$; and for a divisor $D$ we let
\begin{equation*}
A\otimes_X D=\sum_{i\geq 0} \frac{a^i}{c(\mathscr{O}(D))^i}
\end{equation*} 
the {\em{tensor}} of $A$ by $D$. The subscript of the tensor tells the ambient variety where the codimension of $A$ is calculated.
\item This tensor product of a rational equivalence class and a divisor satisfies the ``associative law''.
\begin{equation*}
(A\otimes_X D_1)\otimes_X D_2= A\otimes_X (D_1\otimes D_2)
\end{equation*}
\item if $\mathscr{F}$ is a coherent sheaf of rank 0 on a nonsingular variety $X$, then
\begin{equation*}
c(\mathscr{F} \otimes D)\cap [X]=(c(\mathscr{F}) \cap[X]) \otimes_X D
\end{equation*}
\end{enumerate}
\end{remark}

\subsection{sheaves of logarithmic vector fields}
Let $X$ be a nonsingular variety over a field $k$, and $D$ a reduced effective divisor on $X$. The sheaf of logarithmic vector fields along the divisor $D$ is a subsheaf of the sheaf of regular derivations. Over an open subset where the divisor $D$ has a local equation $f$, it is given by $\textup{Der}_X(-\log D)(U)= \{ \theta \in \textup{Der}_X(U) \,|\, \theta f \in (f) \}$. Saito was the first to study this sheaf in full generality\cite{MR586450}. He showed that $\textup{Der}_X(-\log D)$ is a reflexive sheaf. Its dual sheaf is called the sheaf of logarithmic 1-forms. Saito also gave a criterion when $\textup{Der}_X(-\log D)$ is locally free. A divisor is free if $\textup{Der}_X(-\log D)$ is locally free. Normal crossing divisors (in particular nonsingular divisors) are always free.

From the definition of the $\textup{Der}_X(-\log D)$, we obtain an exact sequence: 
\begin{equation*}
0 \to \textup{Der}_X(-\log D) \to \textup{Der}_X \to \mathscr{J}_D(D) \to 0
\end{equation*}
where $\mathscr{J}_D$ is called the Jacobian ideal of $D$. It is an ideal sheaf of $\mathscr{O}_D$
locally generated by $\theta f$ for all $\theta \in \textup{Der}_X(U)$. From this description, we see the second arrow in the above exact sequence takes each $\theta \in \textup{Der}_X(U)$ to $\theta f$ over an open subset $U$.

Combining this sequence with the sequence defining the Jacobian scheme $J_D$:
\begin{equation*}
0 \to \mathscr{J}_D \to \mathscr{O}_D \to \mathscr{O}_{J_D} \to 0
\end{equation*}
we get a long exact sequence
\begin{equation}\label{fun}
0 \to \textup{Der}_X(-\log D) \to \textup{Der}_X \to \mathscr{O}_D(D) \to \mathscr{O}_{J_D}(D) \to 0
\end{equation} 
 
Exact sequence \eqref{fun} implies that a singular divisor $D$ is free if and only if its Jacobian scheme $J_D$ is Cohen-Macaulay of codimension 2 \cite{MR981525} \cite{MR2184799}. In fact this can be seen from the following argument. To say $J_D$ is Cohen-Macaulay of codimension 2 is equivalent to say the $\mathscr{O}_{J_D}$ is a Cohen-Macaulay $\mathscr{O}_X$-module of dimension $n-2$, where $n=dimX$. This condition is true if and only if the depths of all stalks of $\mathscr{O}_{J_D}$ are $n-2$. The Auslander-Buschbaum formula tells us the projective dimension of $\mathscr{O}_{J_D}$ at each stalk is thus 2, which in turn is equivalent to the syzygy sheaf $\textup{Der}_X(-\log D)$ being locally free. This simple observation allows us to deduce the fact that any reduced effective divisor in a surface is free. In fact, the Jacobian scheme $J_D$ is 0-dimensional in this case. A 0-dimensional module is always Cohen-Macaulay because the depth is smaller or equal to the dimension for a finitely generated module over a local ring.

\section{A Riemann-Roch type formula}

\begin{theorem}\label{RRtype}
Formula ~\eqref{formula} is true if and only if a Riemann-Roch type formula $\pi_*(c(\mathscr{O}_E) \cap [\tilde X])=c(\mathscr{O}_{J_D}) \cap [X]$ holds. Here $J_D$ is the Jacobian scheme of $D$, $\tilde X$ is the blowup of X along $J_D$, $E$ is the exceptional divisor, and $\pi$ is the morphism $\tilde X \to X$. The same result can also be stated in terms of a comparison between a Segre class and a Chern class: Formula ~\eqref{formula} is true if and only if $[X]-s(J_D,X)^{\vee}=c(\mathscr{O}_{J_D}) \cap [X]$. 
\end{theorem}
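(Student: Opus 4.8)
The plan is to compute both sides of~\eqref{formula} explicitly, reduce the asserted equality to a single identity between rational equivalence classes on $X$, and then recognize that identity as the second Riemann-Roch form after clearing the invertible operators $c(\textup{Der}_X)$ and $\otimes_X D$. Since $c_{SM}(1_U)=c_{SM}(1_X)-c_{SM}(1_D)=c(\textup{Der}_X)\cap[X]-c_{SM}(1_D)$, the Lemma lets me write the left-hand side of~\eqref{formula} as $c(\textup{Der}_X)$ applied to an explicit class, and the sequence~\eqref{fun} lets me write the right-hand side in the same shape; because the total Chern class operator $c(\textup{Der}_X)$ is a unit (its codimension-$0$ part is the identity), formula~\eqref{formula} is equivalent to the equality of the two bracketed classes.

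To make this precise I would first extract $c(\textup{Der}_X(-\log D))$ from the four-term sequence~\eqref{fun} by the multiplicativity of total Chern classes, obtaining
\[
c(\textup{Der}_X(-\log D))=\frac{c(\textup{Der}_X)\,c(\mathscr{O}_{J_D}(D))}{c(\mathscr{O}_D(D))}.
\]
The divisor sequence $0\to\mathscr{O}_X\to\mathscr{O}_X(D)\to\mathscr{O}_D(D)\to 0$ gives $c(\mathscr{O}_D(D))=c(\mathscr{O}(D))$, so the right-hand side of~\eqref{formula} becomes $c(\textup{Der}_X)\cap\big(c(\mathscr{O}(D))^{-1}c(\mathscr{O}_{J_D}(D))\cap[X]\big)$. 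Combining this with the Lemma and cancelling $c(\textup{Der}_X)$, formula~\eqref{formula} is equivalent to
\[
[X]-s(D,X)-c(\mathscr{O}(D))^{-1}\cap\big(s(J_D,X)^{\vee}\otimes_X D\big)=c(\mathscr{O}(D))^{-1}c(\mathscr{O}_{J_D}(D))\cap[X].
\]
Here I would use the elementary identity $[X]-s(D,X)=c(\mathscr{O}(D))^{-1}\cap[X]$, which follows from $s(D,X)=[D]/c(\mathscr{O}(D))$ together with $c(\mathscr{O}(D))=1+[D]$, to clear one factor, then apply $c(\mathscr{O}(D))$ to both sides, reducing to $[X]-(s(J_D,X)^{\vee}\otimes_X D)=c(\mathscr{O}_{J_D}(D))\cap[X]$. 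Since $\mathscr{O}_{J_D}$ has rank $0$, Remark~\ref{ten}(6) rewrites the right side as $(c(\mathscr{O}_{J_D})\cap[X])\otimes_X D$, and because $[X]=[X]\otimes_X D$ the whole equation reads $\big([X]-s(J_D,X)^{\vee}\big)\otimes_X D=\big(c(\mathscr{O}_{J_D})\cap[X]\big)\otimes_X D$. The operation $\otimes_X D$ is invertible with inverse $\otimes_X(-D)$ by the associative law of Remark~\ref{ten}(5), so cancelling it yields the Segre-class form $[X]-s(J_D,X)^{\vee}=c(\mathscr{O}_{J_D})\cap[X]$.

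Finally I would show the two stated Riemann-Roch forms have literally the same left-hand side, so the blowup form is equivalent to the Segre-class form. From $0\to\mathscr{O}_{\tilde X}(-E)\to\mathscr{O}_{\tilde X}\to\mathscr{O}_E\to 0$ one gets $c(\mathscr{O}_E)\cap[\tilde X]=\sum_{i\ge 0}E^i\cap[\tilde X]$, hence $\pi_*(c(\mathscr{O}_E)\cap[\tilde X])=[X]+\sum_{i\ge 1}\pi_*(E^i)$. On the other hand the blowup description of the Segre class gives $s(J_D,X)=\sum_{i\ge1}(-1)^{i-1}\pi_*(E^i)$, whose codimension-$i$ part is $(-1)^{i-1}\pi_*(E^i)$; applying the dual flips this to $-\pi_*(E^i)$ in each codimension, so $[X]-s(J_D,X)^{\vee}=[X]+\sum_{i\ge1}\pi_*(E^i)$, matching $\pi_*(c(\mathscr{O}_E)\cap[\tilde X])$ exactly.

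I expect the genuine content, as opposed to transcription, to lie in the careful bookkeeping of the two formal operations $(-)^{\vee}$ and $\otimes_X D$: keeping their codimension-graded sign conventions straight, justifying that $\otimes_X D$ and $c(\textup{Der}_X)$ may be cancelled, and confirming the rank-$0$ hypothesis needed to invoke Remark~\ref{ten}(6). All of the individual steps are reversible, which is precisely what upgrades the computation from a one-way implication to the claimed equivalence.
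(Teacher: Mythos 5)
Your proposal is correct and follows essentially the same route as the paper: extract $c(\textup{Der}_X(-\log D))$ from the four-term sequence, cancel the invertible operators $c(\textup{Der}_X)$ and $c(\mathscr{O}(D))$, untwist by $\otimes_X(-D)$ using Remark~\ref{ten}(5)--(6) to reach the Segre-class form, and then identify it with $\pi_*(c(\mathscr{O}_E)\cap[\tilde X])$ via the blowup description of the Segre class. The only cosmetic difference is that you push forward the $E^i$ terms before comparing, while the paper performs the identical computation upstairs on $\tilde X$ and invokes compatibility of $(-)^{\vee}$ with $\pi_*$.
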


\begin{proof}
The proof of the theorem is based on equation \eqref{hyper} and exact sequence \eqref{fun} in the previous section. Taking the total Chern class for exact sequence \eqref{fun}, we have:
\begin{equation*}
c(\textup{Der}_X(-\log D))=\frac{c(\textup{Der}_X)\cdot c(\mathscr{O}_{J_D}(D))}{c(\mathscr{O}_D(D))}=\frac{c(\textup{Der}_X)\cdot c(\mathscr{O}_{J_D}(D))}{c(\mathscr{O}(D))}
\end{equation*}
In the second equality we use the fact that $c(\mathscr{O}_D(D))=c(\mathscr{O}(D))$. This can be seen by tensoring $\mathscr{O}(D)$ to the exact sequence $0 \to \mathscr{O}(-D) \to \mathscr{O}_X \to \mathscr{O}_D \to 0$ and then taking Chern classes. Then
\begin{equation*}
c_{SM}(1_U)=c(\textup{Der}_X(-\log D))\cap [X] \iff
\end{equation*}

\begin{equation*} 
c(\textup{Der}_X) \cap \left([X]-\frac{[D]}{c(\mathscr{O}(D))}-\frac{s(J_D, X)^{\vee} \otimes_X D}{c(\mathscr{O}(D))}\right)=\frac{c(\textup{Der}_X)\cdot c(\mathscr{O}_{J_D}(D))}{c(\mathscr{O}(D))} \cap[X] \iff
\end{equation*}

\begin{equation*}
\frac{c(\textup{Der}_X)}{c(\mathscr{O}(D))} \cap \left([X]-s(J_D, X)^{\vee} \otimes_X D\right)=\frac{c(\textup{Der}_X)\cdot c(\mathscr{O}_{J_D}(D))}{c(\mathscr{O}(D))} \cap[X] \iff
\end{equation*}

\begin{equation*}
[X]-s(J_D, X)^{\vee} \otimes_X D=c(\mathscr{O}_{J_D}(D)) \cap [X] \iff
\end{equation*}

\begin{equation*}
[X]-s(J_D, X)^{\vee}= c(\mathscr{O}_{J_D}) \cap[X]
\end{equation*} 
In the last step, we ``tensor'' the classes on each side of the equation by the divisor $-D$. The tensor product of $[X]$ and $-D$ is $[X]$ itself because the codimension of the class $[X]$ is $0$. The tensor product of $S(J_D, X)^{\vee} \otimes_X D$ and $-D$ is $S(J_D, X)^{\vee}$ according to property (5) of \ref{ten}. The right hand side term of the equation is taken care of by property (6) of \ref{ten}, because $\mathscr{O}_{J_D}$ is a rank 0 $\mathscr{O}_X$-module. 

Next we want to prove the Riemann-Roch type formula. Recall the Segre class is preserved by proper morphisms of schemes: $\pi_*(s(E,\tilde X))=s(J_D,X)$\cite{MR732620}. It is also easy to see the dual notation of classes is compatible with the push forward of classes. Thus we have:
\begin{equation*}
\begin{split}
[X]-S(J_D, X)^{\vee}& =\pi_*([\tilde X]-s(E,\tilde X)^\vee)\\
                                    & =\pi_*\left([\tilde X]-(\frac{[E]}{1+E})^\vee\right)\\
                                    & =\pi_*\left([\tilde X]-\frac{-[E]}{1-E}\right)\\
                                    & =\pi_*\left((1+\frac{E}{1-E})\cap [\tilde X]\right)\\
                                    & =\pi_*\left(\frac{1}{1-E} \cap [\tilde X]\right)\\
                                    &= \pi_*(c(\mathscr{O}_E)\cap [\tilde X])
\end{split}
\end{equation*} 
In this computation, the notations is chosen so that $[E] \in A_{n-1}(\tilde X)$ and $E$ as a divisor is an abbreviation of $c_1(\mathscr{O}(E))$.
\end{proof}

\begin{corol}\label{surface}
Let $D$ be a reduced effective divisor on a nonsingular complex surface $X$. Then formula ~\eqref{formula} is true if and only if the Tjurina number equals the Milnor number for all singularities of $D$.    
\end{corol}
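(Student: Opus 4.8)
The plan is to specialize Theorem~\ref{RRtype} to the surface setting and isolate the single numerical obstruction that survives. By Theorem~\ref{RRtype}, formula~\eqref{formula} holds if and only if $[X]-s(J_D,X)^\vee=c(\mathscr{O}_{J_D})\cap[X]$. When $X$ is a surface, $D$ is a reduced curve and its Jacobian scheme $J_D$ is zero-dimensional, supported at the finitely many singular points of $D$; moreover $\textup{Der}_X(-\log D)$ is locally free, as recorded at the end of Section~2, so the right-hand side of \eqref{formula} is defined. I would first check that both sides of the identity in Theorem~\ref{RRtype} agree automatically in dimensions $2$ and $1$: the top part of each side is $[X]$, and the dimension-$1$ part vanishes on the right because $c_1(\mathscr{O}_{J_D})=0$ for a sheaf supported in codimension $2$, and on the left because $s(J_D,X)$ lives entirely in dimension $0$ (whence also $s(J_D,X)^\vee=s(J_D,X)$). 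Thus over a surface the whole content of the theorem collapses to the equality of the dimension-$0$ parts, that is, to the single identity of degrees $\deg s(J_D,X)=\deg\bigl(-c_2(\mathscr{O}_{J_D})\cap[X]\bigr)$.

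Next I would compute the right-hand side. Since $\mathscr{O}_{J_D}$ is a coherent sheaf supported in dimension $0$, its Chern character in degree $0$ equals the length of $\mathscr{O}_{J_D}$, and because $c_1(\mathscr{O}_{J_D})=0$ this forces $c_2(\mathscr{O}_{J_D})\cap[X]=-\ell(\mathscr{O}_{J_D})[\mathrm{pt}]$. At a singular point $p$ the length of $\mathscr{O}_{J_D}$ is exactly the Tjurina number $\tau_p=\dim_{\Cbb}\mathscr{O}_{X,p}/(f,f_x,f_y)$, so the dimension-$0$ part of $c(\mathscr{O}_{J_D})\cap[X]$ is $-\bigl(\sum_p\tau_p\bigr)[\mathrm{pt}]$.

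The crux is to identify $\deg s(J_D,X)$ with the Milnor number. I would work locally at each singular point $p$. The degree of the Segre class of a zero-dimensional subscheme of a smooth surface is the Hilbert--Samuel multiplicity of its ideal: if $\pi:\tilde X\to X$ is the blow-up with exceptional divisor $E$, then $\pi_*[E]=0$ for dimension reasons and $\deg s(J_D,X)=-\deg(E\cdot E)=e(J_D)$, the multiplicity of the Jacobian ideal $(f,f_x,f_y)$. Now $f$ is integral over the gradient ideal $(f_x,f_y)$ (a {\L}ojasiewicz-type inequality $|f|\le C\,|z|\,|\nabla f|$, or the classical statement $f\in\overline{(f_x,f_y)}$), so $(f_x,f_y)$ is a reduction of $(f,f_x,f_y)$ and, by Rees's theorem, the two ideals have the same multiplicity. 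For an isolated singularity $(f_x,f_y)$ is a parameter ideal, so its multiplicity equals its colength, the Milnor number $\mu_p=\dim_{\Cbb}\mathscr{O}_{X,p}/(f_x,f_y)$. Summing over $p$ yields $\deg s(J_D,X)=\sum_p\mu_p$.

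Combining the two computations, Theorem~\ref{RRtype} shows that \eqref{formula} holds over $X$ if and only if $\sum_p\mu_p=\sum_p\tau_p$. Since $(f_x,f_y)\subseteq(f,f_x,f_y)$ gives $\mu_p\geq\tau_p$ at every singular point, the global equality of sums is equivalent to the pointwise equality $\mu_p=\tau_p$ at each singularity of $D$, which is the assertion of the corollary. I expect the main obstacle to be precisely the identification $\deg s(J_D,X)=\mu$: it requires both that the Segre degree of $J_D$ be its Hilbert--Samuel multiplicity (equivalently, minus the self-intersection of the exceptional divisor) and that replacing $(f,f_x,f_y)$ by $(f_x,f_y)$ leave this multiplicity unchanged, which rests on the integral dependence of $f$ on its partials together with Rees's theorem. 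The dimensional bookkeeping and the length computation on the right-hand side are then routine.
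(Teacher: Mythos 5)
Your proposal is correct and follows essentially the same route as the paper: specialize Theorem~\ref{RRtype} to the surface case, observe that everything reduces to the dimension-$0$ components, and identify $\deg s(J_D,X)$ with $\sum_p\mu_p$ (via $f\in\overline{(f_x,f_y)}$) and the degree-$0$ part of $c(\mathscr{O}_{J_D})\cap[X]$ with $-\sum_p\tau_p$. The only differences are in the justification of standard steps --- you derive the Segre-class/Milnor-number identification from Samuel multiplicity and Rees's theorem where the paper cites the Milnor-class literature, and you compute $c_2(\mathscr{O}_{J_D})$ by Chern character where the paper filters by powers of the maximal ideal --- and you explicitly supply the inequality $\mu_p\ge\tau_p$ needed to pass from equality of sums to pointwise equality, a point the paper leaves implicit.
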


\begin{proof}
We compare the degree zero components of $c(\mathscr{O}_P) \cap [X]$ and $s(J_D,X)$. The corollary is based on the following results:
\begin{enumerate}
\item $s(J_D,X)=\sum \mu (P) [P]$ where the sum is taken over all singular points $P$ of the divisor $D$ and $\mu (P)$ is the milnor number of $P$.
\item $c(\mathscr{O}_{J_D})=\prod c(\mathscr{O}_P)^{l(P)}$ where the product is taken over all singular points $P$ of the divisor $D$. $l(P)$ is the length of the stalk of $\mathscr{O}_{J_D}$ at $P$ (because we have isolated singularities, each such stalk is an Artinian ring). These numbers are also called Tjurina numbers in literatures. $\mathscr{O}_P$ is the structure sheaf of the nonsingular subscheme supported at $P$.
\item $c(\mathscr{O}_P) \cap [X]=[X]-[P]$
\end{enumerate} 
The proof of property (1) can be found in section 7.1 of \cite{MR732620} and example 10.14 from \cite{MR2113135}. The ideas in the proof is that $s(J_D^{'}, X)$ is closely related to the Milnor class, and is equal to this class when $s(J_D^{'}, X)$ is a 0-dimensional cycle class. Here $J_D^{'}$ is the subscheme having the same support as $J_D$ but defined by a (in priori) smaller ideal generated by all partial derivatives of a local equation of $D$ (the ideal $J_D$ contains all partial derivatives of a local equation of $D$ as well as a local equation of $D$). On the other hand, $s(J_D^{'}, X)=s(J_D, X)$ because the ideal $J_D$ is integral over the smaller ideal $J_D^{'}$. As a result, $s(J_D, X)$ computes the Milnor numbers of the singular points. 

Property (2) is obtained by considering the sequences
\begin{equation*}
0 \to \mathscr{I}^j\slash \mathscr{I}^{j+1} \to \mathscr{I}^i \to \mathscr{I}^{i+1} \to 0
\end{equation*}
where $\mathscr{I}$ is the ideal sheaf of the Jacobian subscheme $J_D$ supported at a singular point $P$, giving $P$ the nonsingular scheme structure. In another word, $\mathscr{I}$ corresponds to the maximal ideal of the local ring of $\mathscr{O}_{J_D}$ at $P$. Because all stalks of $\mathscr{O}_{J_D}$ are Artinian rings, some big powers of $\mathscr{I}$ become 0, so we only have finite sequences to consider. Also notice the sheaves $\mathscr{I}^j\slash \mathscr{I}^{j+1}$ are free $\mathscr{O}_P$-modules of rank ${l(\mathscr{I}^j\slash \mathscr{I}^{j+1})}$. Taking Chern classes for these sequences, we get
\begin{equation*}
c(\mathscr{I}^j) \slash c(\mathscr{I}^{j+1})= c(\mathscr{I}^j\slash \mathscr{I}^{j+1})=c(\mathscr{O}_P)^{l(\mathscr{I}^j\slash \mathscr{I}^{j+1})}
\end{equation*}
Multiplying all such equations together, we get the desired result.

Another way to understand this result by K-thoery of coherent sheaves can be found in example 15.1.5 and 15.3.6 of \cite{MR732620}. 

Property (3) is a rather trivial case of the Riemann-Roch without denominators. A general discussion can be found in section 15.3 of \cite{MR732620}.
\end{proof}
As mentioning in the introduction, the Milnor number being equal to Tjurina number characterizes local quasi homogeneity for isolated singularities \cite{MR0294699}. The previous corollary can be stated in the following manner:

\begin{corol}
Let $D$ be a reduced effective divisor on a nonsingular complex surface $X$. Then formula ~\eqref{formula} is true if and only if the divisor $D$ is locally quasi homogeneous.
\end{corol}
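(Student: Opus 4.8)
The plan is to obtain this statement as an immediate consequence of Corollary~\ref{surface} together with the classical analytic characterization of quasi-homogeneous isolated hypersurface singularities. By Corollary~\ref{surface}, formula~\eqref{formula} holds precisely when the Milnor number $\mu(P)$ and the Tjurina number $\tau(P)$ agree at every singular point $P$ of $D$. Thus it suffices to show that, for a reduced curve $D$ on a surface, local quasi-homogeneity is equivalent to the pointwise equality $\mu(P)=\tau(P)$.

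First I would recall that, since $D$ is a divisor on a two-dimensional $X$, its Jacobian scheme $J_D$ is $0$-dimensional (as already observed in the discussion following exact sequence~\eqref{fun}), so $D$ has only isolated singularities; at each such $P$ the germ of $D$ is an isolated plane-curve singularity, to which both invariants $\mu(P)$ and $\tau(P)$ are attached. I would then pass to the complex-analytic setting in a neighborhood of $P$, noting that $\mu(P)$ and $\tau(P)$ are analytic invariants of the germ, and invoke the theorem of Saito~\cite{MR0294699}: an isolated hypersurface singularity is quasi-homogeneous in suitable local analytic coordinates if and only if $\mu=\tau$. This is exactly the bridge needed to convert the numerical criterion into a geometric one.

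Combining the two equivalences gives the result directly: formula~\eqref{formula} is true if and only if $\mu(P)=\tau(P)$ for every singular point $P$ of $D$, which holds if and only if the germ of $D$ at each $P$ is quasi-homogeneous, that is, if and only if $D$ is locally quasi-homogeneous.

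I expect essentially no genuine obstacle here, since the corollary is a reformulation of Corollary~\ref{surface} through the cited characterization, and the substantive content lives in the theorem of~\cite{MR0294699} that we are entitled to assume. The only point that demands care is the bookkeeping between the algebraic and analytic categories: local quasi-homogeneity is an analytic-local condition, and both $\mu$ and $\tau$ are analytic invariants, so the equality must be checked germ by germ. Because the singularities are isolated, this localization is routine and introduces no difficulty.
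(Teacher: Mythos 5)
Your argument is exactly the paper's: the corollary is obtained by combining Corollary~\ref{surface} with Saito's characterization in \cite{MR0294699} of quasi-homogeneous isolated hypersurface singularities as those with $\mu=\tau$, checked germ by germ at the (isolated) singular points. The proposal is correct and matches the paper's reasoning.
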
 
 
 \section{A further discussion of the Riemann-Roch type formula}
In previous section, we showed the original formula \eqref{formula} is true if and only if a formula concerning a Segre class is true, and it is moreover equivalent to a Riemann-Roch type formula. Although it is a byproduct of the original study, the last formula can be studied independent of the context of Chern classes of logarithmic vector fields. We ask the question: For what type of the subscheme $Y$ of a nonsingular scheme $X$ is the formula: 
\begin{equation*}
[X]-s(Y, X)^{\vee}=c(\mathscr{O}_{Y}) \cap[X]
\end{equation*}
true? 

The formula can be easily tested true if $Y=D$ is a divisor. In fact, both sides equal to $\frac{[X]}{c(\mathscr{O}(-D))}$ in this case. the Segre class $s(Y, X)=\frac{[D]}{c(\mathscr{O}(D))}$ according to item (3) of \ref{ten} so the dual $s(Y, X)^{\vee}=\frac{-[D]}{c(\mathscr{O}(-D))}$. On the other hand $c(\mathscr{O}_D)=\frac{[X]}{c(\mathscr{O}(-D))}$ by taking Chern classes on the exact sequence $0 \to \mathscr{O}(-D) \to \mathscr{O}_X \to \mathscr{O}_D \to 0$. 

Moreover, we have a little deeper result:

\begin{theorem}
The above formula is true if $Y$ is regularly embedded in $X$ of codimension 2.
\end{theorem}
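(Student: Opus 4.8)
The plan is to compute both sides explicitly in terms of the normal bundle and to reduce the general situation to a complete intersection computation. Since $Y$ is regularly embedded in $X$ of codimension $2$, its normal bundle $N=N_YX$ is locally free of rank $2$, and for a regular embedding one has $s(Y,X)=c(N)^{-1}\cap[Y]$ (\cite{MR732620}). Writing $a_1,a_2$ for the Chern roots of $N$ on $Y$, the codimension-$(2+j)$ component of $s(Y,X)=\frac{[Y]}{(1+a_1)(1+a_2)}$ is $(-1)^j h_j(a_1,a_2)\cap[Y]$, where $h_j$ is the $j$-th complete homogeneous symmetric polynomial. Taking the dual multiplies this component by $(-1)^{2+j}=(-1)^j$, so the two signs cancel and $s(Y,X)^\vee=\sum_{j\ge 0}h_j(a_1,a_2)\cap[Y]=\frac{[Y]}{(1-a_1)(1-a_2)}=c(N^\vee)^{-1}\cap[Y]$. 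This is the exact analogue of the divisor computation already in the excerpt, where $s(D,X)^\vee=\frac{-[D]}{c(\mathscr{O}(-D))}$; the difference is only that in codimension $2$ the overall sign is $+$. Thus the statement to be proved reads $[X]-c(N^\vee)^{-1}\cap[Y]=c(\mathscr{O}_Y)\cap[X]$.

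I first treat the model case in which $Y$ is the zero scheme of a regular section of a rank-$2$ bundle $E$ on $X$, so that $N=E|_Y$ and $[Y]=c_2(E)\cap[X]$. Here the Koszul complex furnishes a global locally free resolution $0\to\wedge^2E^\vee\to E^\vee\to\mathscr{O}_X\to\mathscr{O}_Y\to 0$, and multiplicativity of the total Chern class on short exact sequences gives $c(\mathscr{O}_Y)=\frac{c(\wedge^2E^\vee)}{c(E^\vee)}=\frac{1-a_1-a_2}{(1-a_1)(1-a_2)}$. Substituting $[Y]=a_1a_2\cap[X]$ into the left-hand side, the numerator collapses via $(1-a_1)(1-a_2)-a_1a_2=1-a_1-a_2$, so that $[X]-c(N^\vee)^{-1}\cap[Y]=\frac{1-a_1-a_2}{(1-a_1)(1-a_2)}\cap[X]=c(\mathscr{O}_Y)\cap[X]$, and the formula holds in this case.

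The main obstacle is the passage from this model case to an arbitrary codimension-$2$ regular embedding, which need not be a global complete intersection and for which no global Koszul resolution is available. To bridge the gap I would invoke Riemann--Roch without denominators (\cite{MR732620}, \S15.3): for the regular embedding $i\colon Y\hookrightarrow X$ the class $c(\mathscr{O}_Y)\cap[X]-[X]$ is $i_*$ of a universal polynomial in the Chern classes of $N$, the same polynomial for every such embedding. The Segre term $c(N^\vee)^{-1}\cap[Y]$ is visibly $i_*$ of a universal expression in the Chern classes of $N$ as well. Hence the desired identity reduces to an identity of universal polynomials in $c_1(N),c_2(N)$, and by the splitting principle it suffices to check it with $a_1,a_2$ treated as independent Chern roots. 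But this is exactly what the complete intersection computation verifies, since there the $a_i$ are the formally independent Chern roots of $E$. Therefore the polynomial identity holds in general, and with it the theorem.

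An alternative to the last step, better adapted to the blow-up description of Theorem~\ref{RRtype}, is to observe that the computation carried out there is valid for the blow-up along any subscheme and already yields $[X]-s(Y,X)^\vee=\pi_*(c(\mathscr{O}_E)\cap[\tilde X])$ for $\pi\colon\tilde X\to X$ the blow-up along $Y$. One would then evaluate this push-forward directly, using that for a codimension-$2$ regular embedding $E=\Pbb(N)$ is a $\Pbb^1$-bundle over $Y$; I expect this route to reproduce $c(\mathscr{O}_Y)\cap[X]$ after a standard Segre-class computation on the projective bundle. Either way, the essential difficulty is the globalization, and Riemann--Roch without denominators is the tool that makes the model computation suffice.
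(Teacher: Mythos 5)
Your proposal is correct and follows essentially the same route as the paper: both sides are reduced to the identity $[X]-c(N^{\vee})^{-1}\cap[Y]=c(\mathscr{O}_Y)\cap[X]$ via $s(Y,X)=c(N)^{-1}\cap[Y]$ and the sign cancellation in the dual, and then Riemann--Roch without denominators is invoked. The only difference is that you rederive the codimension-two instance of that formula from the Koszul resolution together with the splitting principle, whereas the paper simply cites Examples 15.3.5 and 18.2.1 of \cite{MR732620}.
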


\begin{proof}
Notice in these cases, the subscheme $Y$ has the normal bundle in $X$. Let $N$ be the normal bundle. We have an easy expression of the Segre class in terms of the normal bundle:
\begin{equation*}
s(Y,X)=c(N)^{-1} \cap [Y]
\end{equation*}

Then
\begin{equation*}
[X]-s(Y, X)^{\vee}=[X]-c(N^{\vee})^{-1} \cap (-1)^2[Y]=[X]-c(N^{\vee})^{-1} \cap [Y]
\end{equation*}

By an application of Riemann-Roch without denominators, we can also show
\begin{equation*}
c(\mathscr{O}_Y) \cap[X]=[X]-c(N^{\vee})^{-1} \cap [Y].
\end{equation*}
More details can be found in example 15.3.5 and example 18.2.1 from \cite{MR732620}.
\end{proof}

\begin{remark}
By the same reference \cite{MR732620} example 15.3.5, we can also show that this theorem is not true for regular embedding of codimension 3 or higher. For example in codimension 3, we have:
\begin{equation*}
[X]-s(Y, X)^{\vee}=[X]-c(N^{\vee})^{-1} \cap (-1)^3[Y]=[X]+c(N^{\vee})^{-1} \cap [Y]
\end{equation*}
and
\begin{equation*}
c(\mathscr{O}_Y) \cap[X]=[X]+c(N^{\vee})^{-1}(2-c_1(N))(1-c_1(N))^{-1}\cap [Y]
\end{equation*}
so apparently these two expressions are not the same. If the embedding is of higher codimension, the Riemann-Roch formula for computing $c(\mathscr{O}_Y)\cap[X]$ is even more complicated. 
\end{remark}

\begin{corol}
Formula \eqref{formula} is true if the Jacobian scheme of the divisor $D$ is regularly embedded in $X$ of codimension 2. (The freeness of the divisor $D$ is automatic by the conditions in this case.)  
\end{corol}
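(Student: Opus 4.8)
The plan is simply to chain together the two theorems already proved. First I would apply the immediately preceding theorem to the subscheme $Y = J_D$. The hypothesis of the corollary is precisely that $J_D$ is regularly embedded in $X$ of codimension $2$, so that theorem applies verbatim and yields the Segre--Chern identity
\[
[X]-s(J_D, X)^{\vee}=c(\mathscr{O}_{J_D}) \cap[X]
\]
with no additional work. Next I would invoke Theorem~\ref{RRtype}, which asserts that this very identity is equivalent to the validity of formula~\eqref{formula}. Combining the two steps gives formula~\eqref{formula} at once.

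I do not expect any genuine obstacle, since all of the substance has been front-loaded upstream: the codimension-$2$ instance of Riemann--Roch without denominators is packaged inside the preceding theorem, and the translation between the Segre--Chern identity and formula~\eqref{formula} is exactly the content of Theorem~\ref{RRtype}. The corollary is therefore a direct specialization, and the only thing to check is that the hypothesis on $J_D$ matches the hypothesis $Y$ regularly embedded of codimension $2$, which it does by taking $Y = J_D$.

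Finally, for the parenthetical remark that the freeness of $D$ is automatic, I would argue as follows. A regular embedding of codimension $2$ presents $J_D$ locally as the zero scheme of a regular sequence of length $2$, so that $\mathscr{O}_{J_D}$ is Cohen--Macaulay of codimension $2$ (equivalently, $\mathscr{O}_{J_D}$ is a Cohen--Macaulay $\mathscr{O}_X$-module of dimension $n-2$, with $n=\dim X$). By the Auslander--Buchsbaum argument following exact sequence~\eqref{fun}, this Cohen--Macaulay condition is exactly the one under which the syzygy sheaf $\textup{Der}_X(-\log D)$ is locally free, i.e.\ under which $D$ is free. Hence the freeness hypothesis need not be imposed separately; it is a consequence of the codimension-$2$ regular embedding assumption.
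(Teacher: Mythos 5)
Your proposal is correct and is exactly the argument the paper intends: the corollary is the specialization $Y=J_D$ of the preceding theorem, combined with the equivalence in Theorem~\ref{RRtype}, and your justification of the parenthetical freeness claim via the Cohen--Macaulay/Auslander--Buchsbaum discussion following sequence~\eqref{fun} matches the paper's own reasoning.
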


\bibliographystyle{alpha}
\bibliography{liaobib}
  
\end{document}